\newtheorem{theorem}{Theorem}[section]
\newtheorem{proposition}[theorem]{Proposition}
\newtheorem{lemma}[theorem]{Lemma}
\DeclareMathOperator{\End}{End}
\DeclareMathOperator{\PE}{PEnd}
\DeclareMathOperator{\SPE}{SPEnd}
\numberwithin{equation}{section}
\begin{document}
\baselineskip=15.5pt

\title[Bohr-Sommerfeld Lagrangians of moduli of Higgs
bundles]{Bohr--Sommerfeld Lagrangians of moduli spaces of Higgs bundles}

\author[I. Biswas]{Indranil Biswas}

\address{School of Mathematics, Tata Institute of Fundamental
Research, Homi Bhabha Road, Bombay 400005, India}

\email{indranil@math.tifr.res.in}

\author[N. L. Gammelgaard]{Niels Leth Gammelgaard}

\address{Center for Quantum Geometry of Moduli Spaces, University of
Aarhus, Ny Munkegade 118, DK-8000 Aarhus, Denmark}

\email{nlg@qgm.au.dk}

\author[M. Logares]{Marina Logares}

\address{Instituto de Ciencias Matem\'aticas (CSIC-UAM-UC3M-UCM), C/
Nicolas Cabrera 15, 28049 Madrid, Spain}

\email{marina.logares@icmat.es}

\subjclass[2000]{14H60, 14H70, 53D12}

\keywords{Bohr-Sommerfeld Lagrangian, Higgs bundle,
${\mathbb C}^*$-action, nilpotent cone.}

\date{}

\begin{abstract}
Let $X$ be a compact connected Riemann surface of genus at least
two. Let $M_H(r,d)$ denote the moduli space of semistable Higgs
bundles on $X$ of rank $r$ and degree $d$. We prove that the compact
complex Bohr--Sommerfeld Lagrangians of $M_H(r,d)$ are precisely the
irreducible components of the nilpotent cone in $M_H(r,d)$. This
generalizes to Higgs $G$--bundles and also to the parabolic Higgs
bundles.
\end{abstract}

\maketitle

\section{Introduction}\label{sec1}

Let $M_H(r,d)$ be the moduli space of semistable Higgs bundles of rank
$r$ and degree $d$ on a compact connected Riemann surface $X$ of genus
$g$ at least two. It is an irreducible normal complex projective
variety of complex dimension $2(r^2(g-1)+1)$. This moduli space is
equipped with an algebraic symplectic form. In fact, there is a
canonical algebraic one-form $\omega$ on $M_H(r,d)$ such that
$d\omega$ is the symplectic form. Let
$$
{\mathcal L}\, :=\, {\mathcal O}_{M_H(r,d)}\,=\, M_H(r,d)\times
\mathbb C
$$
be the trivial holomorphic line bundle on $M_H(r,d)$. Consider the
holomorphic connection
$$
D\,:=\, d+\omega
$$
on ${\mathcal L}$, where $d$ denotes the de Rham differential on
functions on $M_H(r,d)$. We note that the curvature of $D$ is
$d\omega$.

A compact Lagrangian on $M_H(r,d)$ is a reduced irreducible compact
complex analytic subset
$$
{\mathbb L} \,\subset\, M_H(r,d)
$$
of dimension $(\dim M_H(r,d))/2\,=\, r^2(g-1)+1$ such that the
restriction of $({\mathcal L}\, ,D)$ to ${\mathbb L}$ is a flat line
bundle. Indeed, this follows immediately from the fact that the curvature
of $D$ coincides with the symplectic form on $M_H(r,d)$.
A compact Bohr--Sommerfeld Lagrangian on $M_H(r,d)$ is a
reduced irreducible compact complex analytic subset ${\mathbb L}
\,\subset\, M_H(r,d)$ of dimension $r^2(g-1)+1$ such that
${\mathcal L}$ admits a nonzero flat section over ${\mathbb L}$.
Clearly, a Bohr--Sommerfeld Lagrangian is a Lagrangian.

A flat section of ${\mathcal L}$ over ${\mathbb L}$ gives a
holomorphic function on ${\mathbb L}$ because the form $\omega$ is
holomorphic, and ${\mathcal L}$ coincides with
${\mathcal O}_{M_H(r,d)}$. On the other hand, $\mathbb L$ does not admit any
nonconstant holomorphic function because it is compact and irreducible. A
constant function is a flat section of ${\mathcal L}\vert_{\mathbb L}$
if and only if the pullback of the connection $D$ to ${\mathcal
L}\vert_{\mathbb L}$ is the de Rham differential. Consequently, a
reduced irreducible compact complex analytic subset
$$
\iota\, :\, {\mathbb L}\, \hookrightarrow \, M_H(r,d)
$$
is a Bohr--Sommerfeld Lagrangian if and only if $\iota^*\omega\,=\,
0$.

The Hitchin map
$$
{\mathcal H}\, :\, M_H(r,d)\, \longrightarrow\, \bigoplus_{i=1}^r
H^0(X,\, K^{\otimes i}_X)
$$
sends any $(E\, ,\theta)$ to $\sum_{i=1}^r
\text{trace}(\theta^i)$. The fiber of ${\mathcal H}$ over $0$ is the
nilpotent cone.

We prove the following:

\begin{theorem}\label{thm0}
The compact Bohr--Sommerfeld Lagrangians in $M_H(r,d)$ are precisely
the irreducible components of the nilpotent cone.
\end{theorem}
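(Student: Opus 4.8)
The plan is to exploit the $\mathbb{C}^*$--action on $M_H(r,d)$ given by $t\cdot(E,\theta)=(E,t\theta)$, whose fixed points lie in the nilpotent cone and which governs the behaviour of the canonical one--form. Write $\Omega:=d\omega$ for the symplectic form, and let $V$ denote the holomorphic vector field generating this action. The structural fact on which everything rests is that $\omega$ is the tautological (Liouville) one--form, so that $\iota_V\omega=0$ and $\mathcal{L}_V\omega=\omega$; by Cartan's formula these are together equivalent to the single identity $\omega=\iota_V\Omega$, i.e. $\omega(w)=\Omega(V,w)$ for every tangent vector $w$. A second ingredient is that the Hitchin map $\mathcal{H}$ is equivariant for the weighted action of $\mathbb{C}^*$ on $\bigoplus_{i=1}^r H^0(X,K_X^{\otimes i})$ with the strictly positive weights $1,\dots,r$, so that the origin is its only fixed point.

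For the first inclusion, let $\mathbb{L}$ be an irreducible component of the nilpotent cone $\mathcal{N}=\mathcal{H}^{-1}(0)$. Since $\mathcal{H}$ is proper, $\mathcal{N}$ is compact; the Hitchin fibration is a flat, equidimensional Lagrangian fibration, so each component of $\mathcal{N}$ is a reduced irreducible compact complex subspace of dimension $r^2(g-1)+1$ on which $\Omega$ vanishes. Because nilpotency of $\theta$ is preserved under scaling, $\mathcal{N}$ is $\mathbb{C}^*$--invariant, and hence $V$ is tangent to $\mathbb{L}$. At a smooth point, for $w\in T\mathbb{L}$ we then obtain $\iota^*\omega(w)=\Omega(V,w)=0$, since both $V$ and $w$ are tangent to the isotropic $\mathbb{L}$; as $\omega$ is holomorphic this vanishing extends over all of $\mathbb{L}$. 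Thus $\iota^*\omega=0$ and $\mathbb{L}$ is Bohr--Sommerfeld.

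The content of the theorem is the reverse inclusion, and here the same identity does the work. Let $\mathbb{L}$ be a compact Bohr--Sommerfeld Lagrangian, so $\iota^*\omega=0$ and, as noted in the introduction, $\mathbb{L}$ is Lagrangian. The coordinate functions of $\mathcal{H}|_{\mathbb{L}}$ are holomorphic on the compact connected space $\mathbb{L}$, hence constant, so $\mathcal{H}(\mathbb{L})=\{c\}$ for a single point $c$ of the affine Hitchin base. Now at a smooth point the vanishing $0=\iota^*\omega(w)=\Omega(V,w)$ for all $w\in T\mathbb{L}$ says that $V$ lies in the symplectic orthogonal $(T\mathbb{L})^{\perp}$, which equals $T\mathbb{L}$ because $\mathbb{L}$ is Lagrangian. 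Hence $V$ is tangent to $\mathbb{L}$, so $\mathbb{L}$ is $\mathbb{C}^*$--invariant. By equivariance $\{c\}=\mathcal{H}(\mathbb{L})$ is then $\mathbb{C}^*$--invariant, and since the only fixed point of the positively weighted action is the origin, we conclude $c=0$. Therefore $\mathbb{L}\subset\mathcal{N}$, and as $\mathbb{L}$ is irreducible of the same dimension $r^2(g-1)+1$ as the components of $\mathcal{N}$, it is exactly one of them.

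The crux is thus the identity $\omega=\iota_V\Omega$: it is this that converts the analytic Bohr--Sommerfeld condition $\iota^*\omega=0$ into the geometric statement that $V$ is tangent to $\mathbb{L}$, and thence into $\mathbb{C}^*$--invariance. I expect the main technical obstacles to be (i) a clean verification of this identity on the moduli space --- most transparently on the stable, smooth locus, where $M_H(r,d)$ is an honest symplectic manifold and $\omega$ is visibly the Liouville form of the cotangent description --- and (ii) the passage across the singular loci of $M_H(r,d)$ and of $\mathbb{L}$, which one handles by arguing on dense smooth loci and invoking the holomorphicity of $\omega$ to propagate the vanishing. The properness, flatness, and Lagrangian nature of the Hitchin fibration used above are by now standard.
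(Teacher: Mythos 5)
Your proposal is correct and follows essentially the same route as the paper: the identity $\omega = i_\xi d\omega$ (the paper's Lemma \ref{lem1}, proved exactly as you do via $\omega(\xi)=0$, $L_\xi\omega=\omega$ and Cartan's formula), the resulting tangency of the $\mathbb C^*$-generator to any Bohr--Sommerfeld Lagrangian, constancy of the Hitchin map on compact irreducible subsets, and equivariance with positive weights forcing the image to be $0$. The only cosmetic difference is that you justify the key identity via the Liouville description on the dense open set $T^*N^s(r,d)$ while the paper verifies it pointwise from the deformation-theoretic definition of $\omega$; the paper also records a second, independent proof of the forward inclusion using the Biswas--Ramanan filtration description of the components of the nilpotent cone, which your argument does not need.
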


\noindent Theorem \ref{thm0} generalizes to other contexts; see Section
\ref{3.1} and Section \ref{3.2}.

Quantization is a procedure which associates to a symplectic manifold $M$ a Hilbert 
space $Q(M)$.  When using geometric quantization, the quantum space is constructed 
from sections or higher cohomology groups of an appropriate complex line bundle, known 
as the prequantization bundle, equipped with a connection whose 
curvature equals the symplectic form. The interesting sections of the prequantization 
bundle are those provided by a choose of a polarization of $M$. For real polarizations 
a theorem by Sniaticky \cite{S} determines the dimension of the quantized space in 
terms of the number of Bohr-Sommerfeld Lagrangians for the symplectic form of $M$.

In the context of moduli spaces, Bohr-Sommerfeld Lagrangians have
previously been studied by Jeffrey and Weitsman \cite{MR1204322} for a
certain real polarization, on the moduli space of flat
$\mathrm{SU}(2)$-connections on a smooth two-manifold, associated to a
pair-of-pants decomposition. Quantizing the moduli space using this
polarization results in distributional quantum states supported on its
Bohr-Sommerfeld Lagrangians. Andersen proved that the Jeffrey-Weitsman
polarization is obtained as the limit of the K\"ahler polarization on
the moduli space of stable rank two bundles with trivial determinant on
a Riemann surface by pinching the Riemann surface structure along the
curves of the pair-of-pants decomposition \cite{MR1612326}. He used
this fact, and the Bohr-Sommerfeld description of the quantum space
for the Jeffrey-Weitsman polarization, to construct a mapping class
group invariant unitary structure for the Hitchin connection in this
setting, he also gives a geometric formula for the Witten-Reshetikhin-Turaev
quantum invariants and proves that the colored Jones polynomials detect the
unknot \cite{A5,1206.2785}.

\section{One-form on the moduli space}

Let $X$ be a compact connected Riemann surface of genus $g$, with $g\,
\geq\, 2$. The holomorphic cotangent bundle of $X$ will be denoted by
$K_X$. Fix an integer $d$ and also fix a positive integer $r$. Let
$M_H(r,d)$ denote the moduli space of semistable Higgs bundles on $X$
of rank $r$ and degree $d$. The space $M_H(r,d)$ is an irreducible normal
complex projective variety of complex dimension $2(r^2(g-1)+1)$.

There is a natural algebraic $1$-form on $M_H(r,d)$; we will recall
its construction. Take any Higgs bundle $(E\, ,\theta)$. Define
$$
C^0\, :=\, \End(E) \,=\, E\otimes E^\vee ~\ \text{ and }~\ C^1\,
:=\, \End(E)\otimes K_X\, .
$$
Let $\text{ad}(\theta)\, :\, C^0\, \longrightarrow\, C^1$ be the
${\mathcal O}_X$--linear homomorphism
defined by
$$
s\, \longmapsto\, [s\, ,\theta]\,=\, s\circ \theta-\theta\circ s\, ,
$$
where the composition is the usual composition of endomorphisms. Now consider
the two-term complex
\begin{equation}\label{e1}
{\mathcal C}^\bullet \, :\, C^0\, \stackrel{\text{ad}(\theta)}{\longrightarrow}\, C^1\, .
\end{equation}
The infinitesimal deformations of the Higgs bundle $(E\, ,\theta)$ are parametrized
by the first hypercohomology ${\mathbb H}^1({\mathcal C}^\bullet)$ \cite{Hi2},
\cite{BR}, \cite{Bot}, \cite{Ma}.

The natural homomorphism of complexes
\begin{equation}\label{co}
\xymatrix{
C^{0}\ar[r]\ar@{=}[d] & C^{1}\ar[d]\\
\End(E)\ar[r] & 0
}
\end{equation}
induces a homomorphism
\begin{equation}\label{e2}
q\, :\, {\mathbb H}^1({\mathcal C}^\bullet)\, \longrightarrow\, H^1(X,\, C^0)\,=\,
H^1(X,\, \End(E))\, .
\end{equation}
We note that $H^1(X,\, \End(E))$ parametrizes the infinitesimal deformations of the
holomorphic vector bundle $E$. The above homomorphism $q$ coincides with the
forgetful map that sends an infinitesimal deformation of the Higgs bundle
$(E\, ,\theta)$ to the infinitesimal deformation of the underlying vector bundle $E$
(forgetting the Higgs field $\theta$). Let
$$
{\mathcal S}{\mathcal D}\, :\, H^1(X,\, \End(E))
\otimes H^0(X,\, \End(E)\otimes K_X)\, \longrightarrow\, H^1(X,\, K_X)\,=\, \mathbb C
$$
be the pairing given by Serre duality. Define
\begin{equation}\label{e3}
\omega(E,\theta)\, :\, {\mathbb H}^1({\mathcal C}^\bullet)\,\longrightarrow\,
{\mathbb C}\, ,
~ \ \alpha\, \longmapsto\, {\mathcal S}{\mathcal D}(q(\alpha)\otimes\theta)\, ,
\end{equation}
where $q$ is the homomorphism in \eqref{e2}.
The one-form $\omega$ on $M_H(r,d)$ defined by $(E\, ,\theta)\, \longmapsto\,
\omega(E,\theta)$ is algebraic. The algebraic two--form $d\omega$ is the symplectic
form on $M_H(r,d)$ \cite{Hi1}, \cite{Hi2}, \cite{BR}, \cite{Bot}, \cite{Ma}.

Let $N^s(r,d)$ denote the moduli space of stable vector bundles on $X$ of rank $r$ and
degree $d$. The total space $T^*N^s(r,d)$ of the (algebraic) cotangent bundle of
$N^s(r,d)$ is a Zariski open subset of $M_H(r,d)$. The restriction of $\omega$ to
$T^*N^s(r,d)$ coincides with the Liouville one-form on $T^*N^s(r,d)$.

For any nonzero complex number $\lambda$, let
\begin{equation}\label{tl}
T_\lambda\, :\, M_H(r,d)\, \longrightarrow\, M_H(r,d)\, , \ ~ (E\, ,\theta)\,\longmapsto
\, (E\, ,\lambda\cdot \theta)
\end{equation}
be the automorphism. Clearly these automorphisms together define an action of the
multiplicative group ${\mathbb C}^*\,=\,{\mathbb C}\setminus\{0\}$ on $M_H(r,d)$. Let
\begin{equation}\label{xi}
\xi\, \in\, H^0(M_H(r,d),\, TM_H(r,d))
\end{equation}
be the algebraic vector field associated to this action of ${\mathbb C}^*$.

\begin{lemma}\label{lem1}
The one-form $i_\xi d\omega$ obtained by contracting the symplectic form $d\omega$ using
the vector field $\xi$ satisfies the equation
$$
i_\xi d\omega\,=\, \omega\, .
$$
\end{lemma}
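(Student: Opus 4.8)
The plan is to apply Cartan's magic formula $\mathcal{L}_\xi=d\circ i_\xi+i_\xi\circ d$ to the one-form $\omega$, which gives
$$
\mathcal{L}_\xi\omega\,=\,d(i_\xi\omega)+i_\xi d\omega\, .
$$
The lemma then reduces to two independent claims: that $\mathcal{L}_\xi\omega=\omega$, and that $i_\xi\omega=0$. Granting both, the desired identity $i_\xi d\omega=\omega$ follows at once.

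First I would establish $\mathcal{L}_\xi\omega=\omega$ from the homogeneity of $\omega$ under the $\mathbb{C}^*$-action. The key observation is that the forgetful map $q$ of \eqref{e2} commutes with $dT_\lambda$: since $T_\lambda$ fixes the underlying bundle $E$ and only rescales $\theta$, the induced infinitesimal deformation of $E$ is unchanged, so $q\circ dT_\lambda=q$. Substituting this into the defining formula \eqref{e3} and using that the Serre duality pairing ${\mathcal S}{\mathcal D}$ is linear in the $\theta$-slot gives $T_\lambda^*\omega=\lambda\cdot\omega$. Differentiating this relation at $\lambda=1$ along the action produces exactly the Lie derivative along $\xi$, whence $\mathcal{L}_\xi\omega=\omega$.

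Next I would show $i_\xi\omega=\omega(\xi)=0$. The point is to locate the tangent vector $\xi$ at $(E,\theta)$ inside $\mathbb{H}^1({\mathcal C}^\bullet)$: it is the infinitesimal deformation that leaves $E$ fixed and moves $\theta$ in the direction of $\theta$ itself. Through the long exact sequence in hypercohomology attached to the complex \eqref{e1}, this class is the image of $\theta\in H^0(X,\,\End(E)\otimes K_X)=H^0(C^1)$ under the connecting homomorphism $H^0(C^1)\to\mathbb{H}^1({\mathcal C}^\bullet)$, and every such class lies in the kernel of $q$. Hence $q(\xi)=0$, and \eqref{e3} gives $\omega(\xi)={\mathcal S}{\mathcal D}(q(\xi)\otimes\theta)=0$.

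The step I expect to be the main obstacle is the precise identification of $\xi$ as a hypercohomology class together with the verification that it sits in $\ker q$, which requires carefully unwinding the deformation-theoretic meaning of the tangent space $\mathbb{H}^1({\mathcal C}^\bullet)$. An alternative route that bypasses this is to argue on the dense open subset $T^*N^s(r,d)\subset M_H(r,d)$, where $\omega$ restricts to the Liouville one-form and $\xi$ restricts to the Euler (fiber-scaling) vector field; there the identity $i_\xi d\omega=\omega$ is the standard tautological fact on a cotangent bundle. Since both sides are algebraic one-forms on the irreducible variety $M_H(r,d)$ and coincide on a Zariski dense open set, they coincide everywhere.
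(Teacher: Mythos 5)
Your proof is correct and follows essentially the same route as the paper: Cartan's formula combined with the homogeneity $T_\lambda^*\omega=\lambda\cdot\omega$ (giving $L_\xi\omega=\omega$) together with the identification of $\xi(E,\theta)$ as the image of $\theta$ under the connecting map $H^0(X,C^1)\to\mathbb{H}^1(\mathcal{C}^\bullet)$, which forces $q(\xi)=0$ and hence $\omega(\xi)=0$. Your alternative density argument via the Liouville form and the Euler vector field on $T^*N^s(r,d)$ is a valid shortcut not taken in the paper, but your main argument already matches the published one.
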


\begin{proof}
We first show that $q(\xi)\,=\, 0$, where $q$ is the homomorphism in \eqref{e2}.
This follows from the fact that the action of ${\mathbb C}^*$ does not change the
underlying vector bundle (recall that $q$ coincides with the forgetful map
that sends the infinitesimal deformations of a Higgs bundle
to the infinitesimals deformation of the underlying vector bundle). To prove
this directly, first note that the kernel of the homomorphism of complexes in
\eqref{co} is the complex
$$
\widetilde{\mathcal C}^\bullet \, :\, \widetilde{C}^0\,:=\, 0\, \longrightarrow\,
\widetilde{C}^1\,:=\, \End(E)\otimes K_X\, .
$$
The tangent vector $\xi((E, \theta))\,\in\, T_{(E, \theta)}M_H(r,d)\,=\,
{\mathbb H}^1({\mathcal C}^\bullet)$ lies in the image of of the homomorphism
$$
H^0(X,\,  \End(E)\otimes K_X)\,=\,
{\mathbb H}^1(\widetilde{\mathcal C}^\bullet)\,\longrightarrow\,
{\mathbb H}^1({\mathcal C}^\bullet)
$$
given by the natural homomorphism $\widetilde{\mathcal C}^\bullet\,\longrightarrow\,
{\mathcal C}^\bullet$. In fact, $\xi((E, \theta))$ is the image of
$$
\theta\, \in\, H^0(X,\,  \End(E)\otimes K_X)\,=\,
{\mathbb H}^1(\widetilde{\mathcal C}^\bullet)\, .
$$
This immediately implies that $q(\xi)\,=\, 0$.

Since $q(\xi)\,=\, 0$, from the definitions of $\omega$ it follows that
\begin{equation}\label{f1}
\omega(\xi)\, =\, 0\, .
\end{equation}
It also
follows from the definitions of $\omega$ and $T_\lambda$ (see
\eqref{tl}) that $T_\lambda^*\omega\,=\,\lambda\cdot \omega$. Therefore, we have
\begin{equation}\label{f2}
L_\xi\omega \, =\, \omega\, .
\end{equation}
Consider the identity $L_\xi\omega \, =\, i_\xi d\omega + d(\omega(\xi))$. In view
of \eqref{f1} and \eqref{f2}, the lemma follows from this identity.
\end{proof}

\section{Bohr--Sommerfeld Lagrangians on the moduli space}

Consider the symplectic from $d\omega$ on $M_H(r,d)$, where $\omega$ is constructed
in \eqref{e3}. A \textit{compact complex Lagrangian} on $M_H(r,d)$ is a reduced
irreducible compact complex analytic subset
$$
\iota\, :\, {\mathbb L}\, \hookrightarrow \, M_H(r,d)
$$
of dimension $r^2(g-1)+1$ such that $\iota^*d\omega\,=\, 0$. A \textit{compact
Bohr--Sommerfeld Lagrangian} on $M_H(r,d)$ is a reduced irreducible compact complex
analytic subset $\iota
\, :\, {\mathbb L}\, \hookrightarrow \, M_H(r,d)$ of dimension $r^2(g-1)+1$ such that
$$
\iota^*\omega\,=\, 0\, .
$$
Since $\iota^*d\omega\,=\, d\iota^*\omega$, a {Bohr--Sommerfeld
Lagrangian} is indeed a Lagrangian.

\begin{lemma}\label{lem2}
Let $\iota\, :\, {\mathbb L}\, \hookrightarrow \, M_H(r,d)$ be a Bohr--Sommerfeld
Lagrangian. For any smooth point $z\, \in\, {\mathbb L}$, the tangent vector
$$\xi (z)\, \in\, T_z M_H(r,d)$$ lies in the subspace $T_z {\mathbb L}\,\subset\,
T_z M_H(r,d)$, where $\xi$ is the vector field in \eqref{xi}.
\end{lemma}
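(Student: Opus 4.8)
The plan is to exploit the self-orthogonality of Lagrangian subspaces together with the identity $i_\xi d\omega\,=\,\omega$ established in Lemma \ref{lem1}. Fix a smooth point $z\,\in\,{\mathbb L}$ and regard $T_z{\mathbb L}$ as a subspace of the symplectic vector space $(T_z M_H(r,d),\, (d\omega)_z)$. The first step is to observe that $T_z{\mathbb L}$ is not merely isotropic but genuinely Lagrangian: since $\iota^*d\omega\,=\,0$, the subspace $T_z{\mathbb L}$ is isotropic for $(d\omega)_z$, and since $\dim{\mathbb L}\,=\,r^2(g-1)+1\,=\,(\dim M_H(r,d))/2$, it has exactly half the dimension of $T_z M_H(r,d)$. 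Hence it is a maximal isotropic subspace. The crucial linear-algebra consequence is that $T_z{\mathbb L}$ coincides with its own symplectic annihilator: writing $W^{\perp}$ for the $(d\omega)_z$-orthogonal complement of a subspace $W$, one has $(T_z{\mathbb L})^{\perp}\,=\,T_z{\mathbb L}$.

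The second step is to show that $\xi(z)$ lies in $(T_z{\mathbb L})^{\perp}$. For any $v\,\in\,T_z{\mathbb L}$, Lemma \ref{lem1} gives
$$
(d\omega)_z(\xi(z),\, v)\,=\,(i_\xi d\omega)(v)\,=\,\omega(v)\, .
$$
But $v$ is tangent to ${\mathbb L}$, and the Bohr--Sommerfeld condition $\iota^*\omega\,=\,0$ says precisely that $\omega$ annihilates every vector tangent to ${\mathbb L}$, so $\omega(v)\,=\,0$. Therefore $(d\omega)_z(\xi(z),\, v)\,=\,0$ for every $v\,\in\,T_z{\mathbb L}$, which is exactly the assertion that $\xi(z)\,\in\,(T_z{\mathbb L})^{\perp}$.

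Combining the two steps yields $\xi(z)\,\in\,(T_z{\mathbb L})^{\perp}\,=\,T_z{\mathbb L}$, which is the claim. I expect the main point requiring care to be the first step, namely the verification that at a smooth point the tangent space is honestly Lagrangian rather than only isotropic, since it is this maximality that licenses the identity $(T_z{\mathbb L})^{\perp}\,=\,T_z{\mathbb L}$. This in turn rests on the dimension bookkeeping $\dim T_z{\mathbb L}\,=\,(\dim T_z M_H(r,d))/2$ at smooth points, which is built into the definition of a compact complex Lagrangian together with $\iota^*d\omega\,=\,0$. The remaining steps are then a direct application of Lemma \ref{lem1} and the Bohr--Sommerfeld hypothesis, with no further analytic input needed.
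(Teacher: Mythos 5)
Your proof is correct and follows essentially the same route as the paper: the paper likewise uses Lemma \ref{lem1} together with $\iota^*\omega=0$ to conclude $d\omega(\xi(z),v)=0$ for all $v\in T_z{\mathbb L}$, and then invokes the Lagrangian (maximal isotropic) property of $T_z{\mathbb L}$ to deduce $\xi(z)\in T_z{\mathbb L}$. Your write-up merely makes explicit the linear-algebra step $(T_z{\mathbb L})^{\perp}=T_z{\mathbb L}$ that the paper leaves implicit.
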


\begin{proof}
Since ${\mathbb L}$ is a Bohr--Sommerfeld Lagrangian, we have
$$
\omega (v)\, =\, 0\, , ~ \ \forall ~ \ v\, \in\, T_z {\mathbb L}\, .
$$
Hence from Lemma \ref{lem1} it follows that
\begin{equation}\label{ze}
d\omega (\xi(z)\, , v)\, =\, 0\, , ~ \ \forall ~ \ v\, \in\, T_z {\mathbb L}\, .
\end{equation}
Since $\mathbb L$ is Lagrangian for the symplectic form $d\omega$, from \eqref{ze}
it follows immediately that $\xi(z)\,\in\, T_z {\mathbb L}$.
\end{proof}

Let
\begin{equation}\label{h}
{\mathcal H}\, :\, M_H(r,d)\, \longrightarrow\, {\mathcal V}\,:=\, \bigoplus_{i=1}^r
H^0(X,\, K^{\otimes i}_X)\, , \ ~ (E\, ,\theta)\,\longmapsto \, \sum_{i=1}^r
\text{trace}(\theta^i)
\end{equation}
be the Hitchin map. Every irreducible component of every fiber of $\mathcal H$ is a
compact complex Lagrangian \cite{Hi2}. The fiber
$$
{\mathcal N}\, :=\, {\mathcal H}^{-1}(0)\, \subset\, M_H(r,d)
$$
is known as the nilpotent cone.

\begin{proposition}\label{prop1}
All compact Bohr--Sommerfeld Lagrangians in $M_H(r,d)$ are contained in the
nilpotent cone $\mathcal N$.
\end{proposition}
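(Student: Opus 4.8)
The plan is to exploit the equivariance of the Hitchin map $\mathcal{H}$ with respect to the ${\mathbb C}^*$-action $T_\lambda$, together with the tangency recorded in Lemma \ref{lem2}. Since $T_\lambda$ rescales the Higgs field by $\lambda$, one has $\mathrm{trace}((\lambda\theta)^i)\,=\,\lambda^i\,\mathrm{trace}(\theta^i)$, so that $\mathcal{H}\circ T_\lambda\,=\,\Lambda_\lambda\circ\mathcal{H}$, where $\Lambda_\lambda$ denotes the linear ${\mathbb C}^*$-action on ${\mathcal V}\,=\,\bigoplus_{i=1}^r H^0(X,\,K_X^{\otimes i})$ acting by the scalar $\lambda^i$ on the $i$-th summand. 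The crucial structural point is that all these weights $1,2,\ldots,r$ are strictly positive.

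First I would observe that the restriction $\mathcal{H}\circ\iota\,:\,{\mathbb L}\,\longrightarrow\,{\mathcal V}$ is constant. Indeed, ${\mathbb L}$ is a compact and irreducible (hence connected) complex analytic space while ${\mathcal V}$ is a vector space, so each coordinate of $\mathcal{H}\circ\iota$ is a holomorphic function on ${\mathbb L}$ and therefore constant by the maximum principle; write $\mathcal{H}\circ\iota\,\equiv\,c\,=\,(c_1,\ldots,c_r)$. Next, differentiating the equivariance relation at $\lambda\,=\,1$ gives, at any smooth point $z\,=\,(E,\theta)$ of ${\mathbb L}$,
\begin{equation}\label{pp1}
d\mathcal{H}_z(\xi(z))\,=\,(c_1,\,2c_2,\,\ldots,\,rc_r)\, ,
\end{equation}
the right-hand side being the derivative of $\lambda\mapsto\Lambda_\lambda c$ at $\lambda\,=\,1$, and $\xi(z)$ being the derivative of $\lambda\mapsto T_\lambda(z)$ at $\lambda\,=\,1$. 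On the other hand, by Lemma \ref{lem2} the vector $\xi(z)$ lies in $T_z{\mathbb L}$, and since $\mathcal{H}\circ\iota$ is constant its differential vanishes on $T_z{\mathbb L}$; hence the left-hand side of \eqref{pp1} is zero. Comparing the two expressions forces $i\,c_i\,=\,0$ for every $i$, so $c\,=\,0$ and therefore ${\mathbb L}\,\subset\,\mathcal{H}^{-1}(0)\,=\,{\mathcal N}$.

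The computation \eqref{pp1} and the vanishing of its left-hand side are immediate once the equivariance and Lemma \ref{lem2} are in place; the only point requiring a little care is the constancy of $\mathcal{H}\circ\iota$ when ${\mathbb L}$ is singular, which however follows equally from Remmert's proper mapping theorem (the image $\mathcal{H}({\mathbb L})$ is a compact analytic subset of the affine space ${\mathcal V}$, hence finite, and connected since ${\mathbb L}$ is irreducible). I do not expect a genuine obstacle here. The real content is the interplay between the constancy forced by compactness of ${\mathbb L}$ and the strict positivity of the weights of the ${\mathbb C}^*$-action on the Hitchin base, which together pin the constant value $c$ to the origin; equivalently, a compact ${\mathbb C}^*$-invariant subset of a representation with only positive weights must reduce to $\{0\}$.
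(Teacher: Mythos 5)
Your proposal is correct and follows essentially the same route as the paper: constancy of $\mathcal{H}\circ\iota$ by compactness and irreducibility of $\mathbb{L}$, the $\mathbb{C}^*$--equivariance of $\mathcal{H}$ with positive weights $1,\ldots,r$ on $\mathcal{V}$, and Lemma \ref{lem2} to force the constant value to the origin. The only (harmless) difference is that you run the last step infinitesimally, differentiating the equivariance at $\lambda=1$, whereas the paper deduces from Lemma \ref{lem2} that $\iota(\mathbb{L})$ is preserved by the full $\mathbb{C}^*$--action and concludes that the constant is a fixed point of the action on $\mathcal{V}$.
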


\begin{proof}
For any nonzero complex number $\lambda$, consider the linear automorphism of
the vector space ${\mathcal V}$ in \eqref{h} defined by
$$
(c_1\, ,\cdots\, , c_i\, ,\cdots\, , c_r)\, \longmapsto\,
(\lambda\cdot c_1\, ,\cdots\, , \lambda^i\cdot c_i\, ,\cdots\, ,\lambda^r\cdot c_r)\, .
$$
These automorphisms together define an action of ${\mathbb C}^*$ on ${\mathcal V}$. The
morphism $\mathcal H$ in \eqref{h} is clearly ${\mathbb C}^*$--equivariant for this
action of ${\mathbb C}^*$ on ${\mathcal V}$ and the action of ${\mathbb C}^*$ on
$M_H(r,d)$ in \eqref{tl}.

Let
$$
\iota\, :\, {\mathbb L}\, \hookrightarrow \, M_H(r,d)
$$
be a compact Bohr--Sommerfeld Lagrangian. So ${\mathbb L}$ does not admit any
nonconstant holomorphic map to $\mathcal V$. In particular, ${\mathcal H}\circ \iota$
is a constant map, where $\mathcal H$ is the Hitchin map defined in \eqref{h}. So we have
\begin{equation}\label{t0}
\iota ({\mathbb L})\, \subset\, {\mathcal H}^{-1}(t_0)
\end{equation}
for some point $t_0\,\in\, \mathcal V$.

{}From Lemma \ref{lem2} it follows that $\iota ({\mathbb L})$ is
preserved by the action of ${\mathbb C}^*$ on $M_H(r,d)$ in \eqref{tl}.
Since the map $\mathcal H$ is ${\mathbb C}^*$--equivariant, we conclude that
$t_0$ in \eqref{t0} is fixed by the action of ${\mathbb C}^*$ on $\mathcal V$.
This implies that $t_0\,=\, 0$.
\end{proof}

\begin{theorem}\label{thm1}
The compact Bohr--Sommerfeld Lagrangians in $M_H(r,d)$ are precisely the
irreducible components of the nilpotent cone $\mathcal N$.
\end{theorem}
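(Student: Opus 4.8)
The plan is to prove the two inclusions separately, writing $\mathcal{N} = \bigcup_j C_j$ for the decomposition of the nilpotent cone into irreducible components. I will first show that every compact Bohr--Sommerfeld Lagrangian equals one of the $C_j$, and then that each $C_j$ is itself a compact Bohr--Sommerfeld Lagrangian; the bulk of the new work lies in the second direction.

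For the first inclusion I would argue as follows. Let $\iota\,:\,\mathbb{L}\,\hookrightarrow\, M_H(r,d)$ be a compact Bohr--Sommerfeld Lagrangian. Proposition \ref{prop1} already gives $\iota(\mathbb{L})\,\subseteq\,\mathcal{N}$, and since $\mathbb{L}$ is irreducible it must lie inside a single component, $\iota(\mathbb{L})\,\subseteq\, C_j$. Here I would invoke the cited fact that every irreducible component of a Hitchin fiber is a compact complex Lagrangian, so $\dim_{\mathbb{C}} C_j\,=\,r^2(g-1)+1\,=\,\dim_{\mathbb{C}}\mathbb{L}$. An inclusion of irreducible analytic sets of equal dimension is an equality, since a proper closed analytic subset of an irreducible set has strictly smaller dimension; hence $\iota(\mathbb{L})\,=\,C_j$ is a component.

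For the reverse inclusion, fix a component $C\,=\,C_j$ with inclusion $\iota_C$, and aim to verify $\iota_C^*\omega\,=\,0$. First I would record that $C$ is $\mathbb{C}^*$--invariant: since $\mathcal{H}$ is $\mathbb{C}^*$--equivariant and $0\,\in\,\mathcal{V}$ is fixed, the whole cone $\mathcal{N}$ is preserved, and connectedness of $\mathbb{C}^*$ prevents it from permuting the finitely many components, so each $C_j$ is individually preserved. Then at a smooth point $z\,\in\, C$ the orbit $\lambda\,\longmapsto\, T_\lambda(z)$ is a holomorphic curve lying in $C$ and passing through $z$, so its velocity $\xi(z)$ lies in $T_z C$. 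Using that $C$ is Lagrangian together with Lemma \ref{lem1}, I would conclude that for every $v\,\in\, T_z C$,
$$
\omega(v)\,=\,(i_\xi d\omega)(v)\,=\, d\omega(\xi(z)\, ,v)\,=\,0\, ,
$$
because both $\xi(z)$ and $v$ lie in the Lagrangian subspace $T_z C$. Hence $\iota_C^*\omega$ vanishes on the dense smooth locus of $C$, and therefore $\iota_C^*\omega\,=\,0$, so $C$ is a compact Bohr--Sommerfeld Lagrangian.

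The hard part will be the bookkeeping around singularities: the components of the nilpotent cone are in general singular, so both the pullback $\iota_C^*\omega$ and the tangency argument for $\xi$ must be carried out on the smooth locus and then extended by density. The conceptual core, however, is Lemma \ref{lem1}: it is precisely the identity $\omega\,=\,i_\xi d\omega$ that converts the Bohr--Sommerfeld condition $\iota^*\omega\,=\,0$ into the $\mathbb{C}^*$--invariance of a Lagrangian via the tangency of $\xi$, and because this mechanism runs in both directions it yields the two inclusions at once.
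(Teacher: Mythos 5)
Your proposal is correct and takes essentially the same route as the paper's primary proof: Proposition \ref{prop1} together with a dimension count gives one inclusion, and for the other you use the $\mathbb{C}^*$--invariance of each component of $\mathcal N$ to place $\xi(z)$ in $T_zC$ and then combine Lemma \ref{lem1} with the Lagrangian property to get $\omega = i_\xi d\omega = 0$ on the smooth locus. The paper additionally records a second, independent proof via the deformation complex of filtered Higgs bundles, but your argument matches its first one (and is in fact slightly more explicit about why a Bohr--Sommerfeld Lagrangian must exhaust an entire component and why each component, not just the whole cone, is $\mathbb{C}^*$--invariant).
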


\begin{proof}
In view of Proposition \ref{prop1}, it suffices to show each
irreducible component of $\mathcal N$ is indeed a Bohr--Sommerfeld Lagrangian. Let
$$
{\mathbb L}\, \subset \, \mathcal N
$$
be an irreducible component. We note that ${\mathbb L}$ is compact because
$\mathcal H$ is a proper morphism \cite{Hi1}, \cite{Hi2}, \cite{Ni}.

Take a smooth point
$$
z\,=\, (E\, ,\theta)\, \in\, {\mathbb L}\, .
$$
Take any tangent vector
$v\, \in\, T_z {\mathbb L}$. We need to show that
\begin{equation}\label{tbs}
\omega(z)(v)\,=\, 0\, .
\end{equation}
{}From Lemma \ref{lem1} we have
\begin{equation}\label{tbs2}
\omega(z)(v)\,=\, d\omega(\xi\, ,v)\, .
\end{equation}

The subvariety $\mathbb L\, \subset\, M_H(r,d)$ is closed under the action of
${\mathbb C}^*$ on $M_H(r,d)$. Indeed, the Hitchin map ${\mathcal H}$ in \eqref{h} is
${\mathbb C}^*$--equivariant, and the point $0\, \in\, \mathcal V$ is fixed by the
action of ${\mathbb C}^*$. Therefore, it follows that
$$
\xi(z)\, \in\, T_z {\mathbb L}\, .
$$
Since ${\mathbb L}$ is a Lagrangian subvariety of $M_H(r,d)$ for the
symplectic form $d\omega$ on $M_H(r,d)$ \cite{Hi2}, \cite{La}, and
$\xi(z)\, ,v\, \in\, T_z {\mathbb L}$, we have
\begin{equation}\label{tbs3}
d\omega(\xi(z)\, ,v)\,=\, 0\, .
\end{equation}
Finally, \eqref{tbs} follows from \eqref{tbs2} and \eqref{tbs3}.

We give an alternative proof which may be of independent interest.

Associated to
${\mathbb L}$ there is an integer $n\, \geq\, 2$ and pairs of integers $(r_1\, d_1)\, ,
\cdots \, , (r_n\, d_n)$ satisfying the following conditions:
\begin{enumerate}
\item for any $(E\, ,\theta)\, \in\, {\mathbb L}$, there is filtration of
subbundles
\begin{equation}\label{e5}
0\, =\, E_0\, \subset\, E_1\, \subset\, E_2 \, \subset\, \cdots 
\, \subset\, E_{n-1} \, \subset\, E_n\,=\, E
\end{equation}
such that $\text{rank}(E_i/E_{i-1}) \,=\, r_i$ and
$\text{degree}(E_i/E_{i-1}) \,=\, d_i$ for all $1\,\leq\, i\, \leq\, n$, and

\item $\theta(E_i)\, \subset\, E_{i-1}\otimes K_X$ for all $1\,\leq\, i\, \leq\, n$.
\end{enumerate}
(See the proof of Theorem 5.3 in \cite[p. 228]{BR}.)

As before, $z\,=\, (E\, ,\theta)\, \in\, {\mathbb L}$ is a smooth point. We will
describe the tangent space $T_z {\mathbb L}$. Consider the filtration in \eqref{e5}. Let
$$
\End^p(E)\, \subset\, \End(E)
$$
be the subbundle defined by the condition that $\End^p(E)(E_i)\, \subset\, E_i$
for all $1\,\leq\, i\, \leq\, n$. Let
$$
\End^n(E)\, \subset\, \End^p(E)
$$
be the subbundle defined by the condition that $\End^n(E)(E_i)\, \subset\, E_{i-1}$
for all $1\,\leq\, i\, \leq\, n$. Let ${\mathcal D}^\bullet$ be the two-term complex
$$
{\mathcal D}^\bullet \, :\, D^0\,:=\,\End^p(E)\,
\stackrel{\text{ad}(\theta)}{\longrightarrow}\, D^1\,:=\,\End^n(E)\otimes K_X\, ,
$$
where $\text{ad}(\theta)$ is the homomorphism in \eqref{e1}. Note that
the condition $\theta(E_i)\, \subset\, E_{i-1}\otimes K_X$, $1\,\leq\, i\, \leq\, n$,
ensures that $\text{ad}(\theta)(\End^p(E))\, \subset\, \End^n(E)\otimes K_X$. The
inclusion of complexes
$$
\xymatrix{
D^{0}\ar[d]\ar[r]^{ad(\theta)} & D^{1}\ar[d]\\
\End(E)\ar[r]^{\hspace*{-10pt}ad(\theta)}& \End(E)\otimes K_{X}
}
$$
induces a homomorphism of hypercohomologies
$$
\varphi\, :\, {\mathbb H}^1({\mathcal D}^\bullet)\,\longrightarrow\,
{\mathbb H}^1({\mathcal C}^\bullet)
$$
(see \eqref{e1}). We have
$$
T_z {\mathbb L}\,=\, {\mathbb H}^1({\mathcal D}^\bullet)
$$
\cite[p. 229, Proposition 5.7]{BR} (see also the proof of Proposition (4.9) in
\cite[p. 662]{La}), and the differential $T_z {\mathbb L}\,\longrightarrow\,
T_z M_H(r,d)$ of the inclusion map ${\mathbb L}\, \hookrightarrow \, M_H(r,d)$
coincides with the above homomorphism $\varphi$. 

We note that
\begin{itemize}
\item $[\End^p(E)\, ,\End^n(E)]\, \subset\, \End^n(E)$, and

\item the homomorphism
$$\text{trace}\, :\, \End(E)\, \longrightarrow\, {\mathcal O}_X
$$
vanishes identically on $\End^n(E)$.
\end{itemize}
Therefore, for the homomorphism $\omega(E,\theta)$ in \eqref{e3}, we have
$$
\omega(E,\theta)\circ \varphi\,=\, 0\, .
$$
Hence $\iota^*\omega\,=\, 0$. In other words, $\mathbb L$ is
a Bohr--Sommerfeld Lagrangian.
\end{proof}

\subsection{Principal Higgs bundles}\label{3.1}

Theorem \ref{thm1} generalizes to the moduli spaces of $G$--Higgs bundles of fixed
topological type, where $G$ is any reductive complex algebraic group.

Let $G$ be a complex reductive group. The topological type of $G$--Higgs bundles
on $X$ are parametrized by $\pi_1(G)$. For any $\delta\, \in\, \pi_1(G)$, let
$M_H(G,\delta)$ denote the moduli space of $G$--Higgs bundles on $X$ of
topological type $\delta$.

We have the following:

\begin{theorem}\label{thm2}
The compact Bohr--Sommerfeld Lagrangians in $M_H(G,\delta)$ are precisely the
irreducible components of the nilpotent cone of $M_H(G,\delta)$.
\end{theorem}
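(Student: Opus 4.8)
The plan is to repeat, essentially verbatim, the strategy used for Theorem \ref{thm1}, replacing the endomorphism bundle $\End(E)$ by the adjoint bundle $\text{ad}(E_G)$ throughout. First I would recall the construction of the canonical one--form on $M_H(G,\delta)$. A $G$--Higgs bundle is a pair $(E_G,\theta)$ with $E_G$ a principal $G$--bundle and $\theta\in H^0(X,\text{ad}(E_G)\otimes K_X)$; its infinitesimal deformations are governed by the first hypercohomology of the two--term complex $\text{ad}(E_G)\xrightarrow{\text{ad}(\theta)}\text{ad}(E_G)\otimes K_X$, in complete analogy with \eqref{e1}. Fixing a nondegenerate $G$--invariant symmetric bilinear form on $\mathfrak{g}$ (which exists since $G$ is reductive) gives a self--duality $\text{ad}(E_G)\cong\text{ad}(E_G)^\vee$, and, exactly as in \eqref{e3}, pairing the image under the forgetful map $q$ with $\theta$ via Serre duality produces an algebraic one--form $\omega_G$ whose differential $d\omega_G$ is the symplectic form. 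The scaling action $T_\lambda(E_G,\theta)=(E_G,\lambda\theta)$ of ${\mathbb C}^*$ then has an associated vector field $\xi_G$ satisfying $i_{\xi_G}d\omega_G=\omega_G$, by the identical computation of Lemma \ref{lem1}: the action fixes the underlying bundle, so $q(\xi_G)=0$ and hence $\omega_G(\xi_G)=0$, while $T_\lambda^*\omega_G=\lambda\cdot\omega_G$ gives $L_{\xi_G}\omega_G=\omega_G$, and Cartan's formula closes the argument.

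Next I would treat the inclusion ``Bohr--Sommerfeld $\Rightarrow$ nilpotent cone''. The Hitchin map $\mathcal{H}_G$ sends $(E_G,\theta)$ to the tuple $(p_1(\theta),\dots,p_k(\theta))$ built from a homogeneous generating set $p_1,\dots,p_k$ of ${\mathbb C}[\mathfrak{g}]^G$, landing in $\bigoplus_j H^0(X,K_X^{\otimes \deg p_j})$; it is proper and ${\mathbb C}^*$--equivariant, where ${\mathbb C}^*$ acts on the $j$--th summand with weight $\deg p_j$. Because each $\deg p_j$ is strictly positive, the unique ${\mathbb C}^*$--fixed point of the base is $0$. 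Now the analogues of Lemma \ref{lem2} and Proposition \ref{prop1} go through unchanged: a compact Bohr--Sommerfeld Lagrangian $\mathbb{L}$ satisfies $\xi_G(z)\in T_z\mathbb{L}$ at smooth points, so $\mathbb{L}$ is ${\mathbb C}^*$--invariant; compactness and irreducibility force $\mathcal{H}_G\circ\iota$ to be constant, and equivariance then forces its value to be the fixed point $0$. Hence every compact Bohr--Sommerfeld Lagrangian lies in the nilpotent cone $\mathcal{N}_G=\mathcal{H}_G^{-1}(0)$.

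For the reverse inclusion I would follow the \emph{first} proof of Theorem \ref{thm1}. Take an irreducible component $\mathbb{L}\subset\mathcal{N}_G$. It is compact by properness of $\mathcal{H}_G$, and it is ${\mathbb C}^*$--invariant because $\mathcal{N}_G=\mathcal{H}_G^{-1}(0)$ with $0$ a ${\mathbb C}^*$--fixed point, so $\xi_G(z)\in T_z\mathbb{L}$ at every smooth point $z$. The irreducible components of the $G$--nilpotent cone are Lagrangian for $d\omega_G$, so for any $v\in T_z\mathbb{L}$ we have $d\omega_G(\xi_G(z),v)=0$; combined with $i_{\xi_G}d\omega_G=\omega_G$ this yields $\omega_G(z)(v)=0$, that is $\iota^*\omega_G=0$, so $\mathbb{L}$ is Bohr--Sommerfeld. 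This is the exact analogue of the direct argument \eqref{tbs2}--\eqref{tbs3}.

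The main point to verify is precisely the two structural facts invoked for the base and for the cone. That the Hitchin base carries a ${\mathbb C}^*$--action with only positive weights is elementary once one knows that a homogeneous generating set of ${\mathbb C}[\mathfrak{g}]^G$ has strictly positive degrees (the exponents of $G$ shifted by one, together with the degree--one invariants coming from the center). The genuine obstacle is establishing \emph{properness} of $\mathcal{H}_G$ and the \emph{Lagrangian} property of the components of $\mathcal{N}_G$ for arbitrary reductive $G$; for classical groups these reduce to the $\mathrm{GL}_r$ case through the standard representation, but in general one must appeal to the Hitchin--system theory for principal bundles. One could instead try to mimic the second, explicit proof of Theorem \ref{thm1}, replacing the filtration \eqref{e5} by a reduction of $E_G$ to a parabolic subgroup $P\subset G$ with $\theta$ valued in the nilradical of $\mathfrak{p}$ twisted by $K_X$, and then checking that the invariant pairing vanishes on the corresponding subbundles of $\text{ad}(E_G)$; this is more delicate to set up uniformly, so I would keep the first route as the main argument.
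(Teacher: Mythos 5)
Your proposal is correct and takes essentially the same route as the paper, which simply asserts that the first proof of Theorem \ref{thm1} goes through without change for $M_H(G,\delta)$ --- exactly the argument you carry out, with the same reliance on properness of the $G$--Hitchin map and the Lagrangian property of the components of the nilpotent cone. The only additional remark in the paper concerns the second proof: it also works, the key Lie--theoretic input being that the annihilator of a parabolic subalgebra $\mathfrak p$ under a $G$--invariant nondegenerate form is the Lie algebra of the unipotent radical of $P$, which is the fact you correctly identified as the delicate point in that alternative route.
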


The proof is same as for the case of Higgs vector bundles. The first proof of Theorem
\ref{thm1} goes through without any change. The second proof of Theorem \ref{thm1}
also works. Indeed, for this the only point to note is that if $\mathfrak p$ is the Lie
algebra of a parabolic subgroup $P$ of $G$, and $\mathcal B$ is any $G$--invariant
nondegenerate symmetric bilinear form on the Lie algebra of $G$, then the annihilator
of $\mathfrak p$, with respect to $\mathcal B$, is the Lie algebra of the unipotent
radical of $P$.

\subsection{Parabolic Higgs bundles}\label{3.2}

Theorem \ref{thm1} generalizes to the moduli spaces of parabolic Higgs bundles. In this
case the assumption that $g\, \geq\, 2$ is not needed. It is enough to assume that there
are stable parabolic Higgs bundles. 

In this case the deformation complex is 
$$
\mathcal{A}^{\bullet}\,:\, \PE(E)\,\longrightarrow\, \SPE(E)\otimes K(D)
$$
where
\begin{itemize}
\item $\PE(E)$ is the bundle of endomorphism of $E$ which preserves the quasiparabolic 
filtrations,

\item $\SPE(E)\, \subset\, \PE(E)$ is the subsheaf defined by the nilpotent endomorphisms
with respect to the quasiparabolic
filtrations, and

\item $K(D)\,=\, K_{X}\otimes 
\mathcal{O}_{X}(D)$ with $D$ being the divisor provided by the parabolic data.
\end{itemize}

Imitating $q$ in \eqref{e2}, consider the homomorphism 
$$
\widetilde{q}\,:\, \mathbb{H}^{1}(\mathcal{A}^{\bullet})\,\longrightarrow\,
H^{1}(X,\PE(E))\, .
$$
Finally in this case Serre duality for parabolic bundles provides the following morphism
$$
\widetilde{{\mathcal S}{\mathcal D}}\,:\,
H^{1}(X,\PE(E))\otimes H^{0}(X,\SPE(E)\otimes K(D))\,\longrightarrow\, \mathbb{C}\, .
$$
Now we define the algebraic one form for parabolic Higgs bundles as
$$
\omega(E,\theta)\,:\,\mathbb{H}^{1}(\mathcal{A}^{\bullet})
\,\longrightarrow \,\mathbb{C}\,, \ ~ \ \alpha\,\longmapsto\,
\widetilde{{\mathcal S}{\mathcal D}}(\widetilde{q}(\alpha)\otimes \theta)\, .
$$
Once the set-up is built, Theorem \ref{thm1} has a straightforward generalization.

Theorem \ref{thm1} also generalizes to parabolic
analog of principal Higgs bundles; see \cite{BBN}, \cite{Bo}, \cite{PR}, \cite{He}
for parabolic analog of principal Higgs bundles.

\section*{Acknowledgements}

We thank the referee for helpful comments. This work was supported by a Marie Curie
International Research Staff Exchange Scheme Fellowship within the 7th European
Union Framework Programme (FP7/2007-2013) under grant agreement
no. 612534, project MODULI - Indo European Collaboration. The second
author was partly supported by the center of excellence grant
'Center for Quantum Geometry of Moduli Spaces' from the Danish
National Research Foundation (DNRF95). The first author is supported
by the J. C. Bose Fellowship.


\end{document}